\definecolor{modra3}{rgb}{.1,.0,.4}
\newcommand*\patchAmsMathEnvironmentForLineno[1]{%
  \expandafter\let\csname old#1\expandafter\endcsname\csname #1\endcsname
  \expandafter\let\csname oldend#1\expandafter\endcsname\csname end#1\endcsname
  \renewenvironment{#1}%
     {\linenomath\csname old#1\endcsname}%
     {\csname oldend#1\endcsname\endlinenomath}}% 
\newcommand*\patchBothAmsMathEnvironmentsForLineno[1]{%
  \patchAmsMathEnvironmentForLineno{#1}%
  \patchAmsMathEnvironmentForLineno{#1*}}%
 \newtheorem{theorem}{Theorem}
 \newtheorem{conjecture}[theorem]{Conjecture}
 \newtheorem{problem}[theorem]{Problem}
\begin{document}

\title{The hamburger theorem}

\author{
Mikio Kano\thanks{Supported by JSPS KAKENHI Grant Number 25400187. Ibaraki University, Hitachi, Ibaraki, Japan. E-mail: \texttt{kano@mx.ibaraki.ac.jp}}
  \and 
Jan Kyn\v{c}l\thanks{Supported by Swiss National Science Foundation Grants 200021-137574 and 200020-144531, by the grant no. 14-14179S of the Czech Science Foundation (GA\v{C}R) and by the grant GAUK 1262213 of the Grant Agency of Charles University. Department of Applied Mathematics and Institute for Theoretical Computer Science, Charles University, Faculty of Mathematics and Physics, Malostransk\'e n\'am.~25, 118 00~ Prague, Czech Republic; and \'Ecole Polytechnique F\'ed\'erale de Lausanne, Chair of Combinatorial Geometry, EPFL-SB-MATHGEOM-DCG, Station 8, CH-1015 Lausanne, Switzerland. E-mail: \texttt{kyncl@kam.mff.cuni.cz}}
} %end of author

%\inst{1}
%\inst{2}$^{,}$\inst{3}

%\pagestyle{plain}

\date{}

\maketitle

\begin{center}
In memory of Ferran Hurtado and Ji\v{r}\'{i} Matou\v{s}ek
\bigskip
%
%{\footnotesize
%\inst{1}
%Ibaraki University, Hitachi, Ibaraki, Japan \\
%\texttt{kano@mx.ibaraki.ac.jp}
%\\\ \\
%\inst{2}
%Department of Applied Mathematics and Institute for Theoretical Computer Science, \\
%Charles University, Faculty of Mathematics and Physics, \\
%Malostransk\'e n\'am.~25, 118 00~ Prague, Czech Republic; \\
%\texttt{kyncl@kam.mff.cuni.cz}
%\\\ \\
%\inst{3}
%\'Ecole Polytechnique F\'ed\'erale de Lausanne, Chair of Combinatorial Geometry, \\
%EPFL-SB-MATHGEOM-DCG, Station 8, CH-1015 Lausanne, Switzerland
%\\\ \\
%}
\end{center}

%==============================================================================================

\begin{abstract}
We generalize the ham sandwich theorem to $d+1$ measures on $\mathbb{R}^d$ as follows.
Let $\mu_1,\allowbreak \mu_2, \dots,\allowbreak \mu_{d+1}$ be absolutely continuous finite Borel measures on $\mathbb{R}^d$. Let $\omega_i=\mu_i(\mathbb{R}^d)$ for $i\in [d+1]$, $\omega=\min\{\omega_i; i\in [d+1]\}$ and assume that $\sum_{j=1}^{d+1} \omega_j=1$. 
Assume that $\omega_i \le 1/d$
for every $i\in[d+1]$.
Then there exists a hyperplane $h$ 
such that each open halfspace $H$ defined by $h$ satisfies
%
%(i) $\sum_{j=1}^{d+1} \mu_j(H) = (\sum_{j=1}^{d+1} \mu_j(\mathbb{R}^d))/2$
%that bisects the total measure $\sum_{j=1}^{d+1} \mu_j$ exactly in half
%
$\mu_i(H) \le (\sum_{j=1}^{d+1} \mu_j(H))/d$ for every $i \in [d+1]$ and $\sum_{j=1}^{d+1} \mu_j(H) \ge \min\{1/2, 1-d\omega\} \ge 1/(d+1)$.
As a consequence we obtain that every $(d+1)$-colored set of $nd$ points in $\mathbb{R}^d$ such that no color is used for more than $n$ points can be partitioned into $n$ disjoint rainbow $(d-1)$-dimensional simplices.
\smallskip
\\
\textbf{Keywords:} Borsuk--Ulam theorem; ham sandwich theorem; hamburger theorem; absolutely continuous Borel measure;
colored point set.

\end{abstract}

%==============================================================================================

\section{Introduction}
%We first explain a motivation of our main theorem.
It is well-known that if $n$ red points and $n$ blue points are
given in the plane in general position,
then there exists a noncrossing perfect matching on these points
where each edge is a straight-line segment and connects a
red point with a blue point. 
%This result was generalized to colorful simplices as follows.
Akiyama and Alon~\cite{AA1989} generalized this result to higher dimensions as follows.

For a positive integer $m$, we write $\mathbb{R}^m$ for the $m$-dimensional Euclidean space and $[m]$ for the set $\{1,2,\dots,m\}$.

\begin{theorem}[Akiyama and Alon~\cite{AA1989}]
\label{theorem_1_colorful_partition}
Let $d\ge 2$ and $n\ge 2$ be integers, and for each $i \in [d]$,
let $X_i$ be a set of $n$ points in $\mathbb{R}^d$ such that all $X_i$ are
pairwise disjoint and
no $d+1$ points of $X_1\cup X_2 \cup \dots \cup X_d$ are 
contained in a hyperplane.
Then there exist $n$ pairwise disjoint $(d-1)$-dimensional simplices, 
each of which contains precisely one vertex from each 
$X_i, i\in [d]$. 
\end{theorem}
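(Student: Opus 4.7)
The plan is to prove the theorem by induction on $n$, with the classical ham sandwich theorem as the main tool. The ham sandwich theorem guarantees, for any $d$ finite point sets in $\mathbb{R}^d$, a hyperplane $h$ such that each open halfspace defined by $h$ contains at most $\lfloor |X_i|/2 \rfloor$ points of $X_i$ for every $i$.

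For the base case $n = 1$, the $d$ points, one per color, form a $(d-1)$-simplex by the general position hypothesis. For the inductive step, assume the statement holds for all smaller values of $n$ and apply the ham sandwich theorem to $X_1, \dots, X_d$. The argument then splits by the parity of $n$. If $n = 2m$ is even, a standard perturbation allows us to assume the bisecting hyperplane $h$ avoids all points of $\bigcup_i X_i$, so each open halfspace contains exactly $m$ points of each color, and we recurse on each halfspace. If $n = 2m+1$ is odd, each open halfspace contains at most $m$ points of each color, forcing at least one point of each color onto $h$; the hypothesis that no $d+1$ points lie on a hyperplane then forces \emph{exactly} one point of each color onto $h$, so these $d$ points form a rainbow $(d-1)$-simplex $S_0 \subset h$, while the $m$ points of each color in each open halfspace are partitioned inductively into $m$ simplices per side.

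Disjointness of the resulting $n$ simplices is automatic by convexity: a simplex with all vertices strictly in an open halfspace is itself contained in that open halfspace, and $S_0 \subset h$ is disjoint from both open halfspaces. The counts also match: $2m$ simplices in the even case, or $2m + 1 = n$ simplices in the odd case (one from $S_0$ plus $m$ per side).

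The main subtle point is the odd-$n$ case: the $d$ forced points of $S_0$ lie on the common hyperplane $h$, and I need them to span a nondegenerate $(d-1)$-simplex inside $h$. The stated hypothesis (no $d+1$ points on a hyperplane) does not by itself rule out affine dependence of these $d$ points within $h$, so I would strengthen the general position assumption by a preliminary perturbation of the input ensuring that no $k$ points lie on a $(k-2)$-dimensional flat for each $k \le d$, and then treat the original input as a limiting case. An alternative is to exploit the nonuniqueness of the ham sandwich hyperplane and choose one whose $d$ forced points span $h$.
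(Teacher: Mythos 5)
Your overall route---induction on $n$ plus the ham sandwich theorem, splitting by parity---is exactly the classical Akiyama--Alon argument that the paper cites for this theorem (the paper gives no proof of it; its new machinery, the hamburger theorem, is only needed once there are $d+1$ color classes). The skeleton of your proof is sound, but the place you yourself flag as ``the main subtle point'' is handled incorrectly. First, the worry is unfounded: the stated hypothesis already rules out affine dependence of the $d$ forced points. If some $d$ points of $X_1\cup\dots\cup X_d$ were affinely dependent, they would lie in a flat of dimension at most $d-2$; adjoining any further point of the configuration (one exists, since $dn\ge 2d>d$) yields $d+1$ points in a hyperplane, contradicting the hypothesis. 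The same one-line remark is also what your recursion silently needs at the bottom: your base case $n=1$ is false under the literal hypothesis (for $d=3$, three collinear points of distinct colors satisfy ``no four points in a plane'' vacuously, yet span no triangle); it becomes true only because the $d$ points of a recursive subinstance with $n'=1$ sit inside the original configuration of at least $d+1$ points---or, more cleanly, because you can run the induction with the strengthened hypothesis that every at most $d+1$ of the points are affinely independent, after checking once that the theorem's hypothesis implies this when $n\ge 2$.

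Second, the repair you actually propose---perturb the input and ``treat the original input as a limiting case''---does not work as stated: pairwise disjointness of closed simplices is not preserved under limits, so simplices that are disjoint for every small perturbation may intersect for the original points, and you would still owe an argument that some fixed combinatorial partition valid for arbitrarily small perturbations remains a valid disjoint partition at the limit. Since the degeneracy never occurs, drop the perturbation entirely (your vague alternative of choosing a different ham sandwich hyperplane is likewise unnecessary). A smaller gloss in the even case: ``perturb so that $h$ avoids all points'' is not automatic, because when $h$ contains points you must tilt it so that each such point lands on a \emph{prescribed} side in order to equalize the counts to $m$ per color in each open halfspace; this is possible precisely because the at most $d$ points on $h$ are affinely independent (the same observation again), so every assignment of sides is realizable by a sufficiently small tilt.
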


The planar version of Theorem~\ref{theorem_1_colorful_partition} follows, for example, from the simple fact that a shortest geometric red-blue perfect matching is noncrossing~\cite{AA1989}. This elementary metric argument does not generalize to higher dimensions, however. Akiyama and Alon~\cite{AA1989} proved Theorem~\ref{theorem_1_colorful_partition} using the ham sandwich theorem; see Subsection~\ref{sec_ham_sandwich}.

Aichholzer et al.~\cite{ACFFHHHW10_edge} and Kano, Suzuki and Uno~\cite{KSU2014} extended the planar version of Theorem~\ref{theorem_1_colorful_partition} to an arbitrary number of colors as follows. 
%Every edge of a {\em properly colored matching} connects two points with distinct colors.
%, and if the number of colors is two, such a matching is often called an {\em alternating matching}.

%\begin{theorem}[Kano, Suzuki and Uno~\cite{KSU2014}]
%\label{theorem_2_rgb}
%Let $R$, $G$ and $B$ be disjoint finite sets of red, green and blue points in the plane, respectively.
%Assume that no three points of $R\cup G \cup B$ lie on a line.
%If $|R\cup G\cup B|=2n$, $|R|\le n$, $|G|\le n$ and $|B|\le n$, then there exists a noncrossing geometric perfect matching on $R \cup G \cup B$ where every edge connects two points of distinct colors; see Figure~\ref{fig_1_rgb}.
%\end{theorem}

\begin{figure}
\begin{center}
\epsfbox{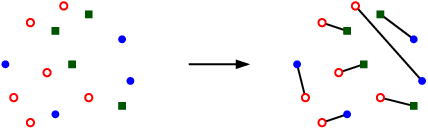}
\end{center}
\caption{A noncrossing geometric 
properly colored perfect matching.}
\label{fig_1_rgb}
\end{figure}

%%{ACFFHHHW10_edge} In Theorem 2.3, they have the proof in the plane using shortest rainbow matching.

%-->
%{BMNS15_an_optimal} They proved the plane version of the discrete hamburger theorem, probably independently of us.

\begin{theorem}[Aichholzer et al.~\cite{ACFFHHHW10_edge}; Kano, Suzuki and Uno~\cite{KSU2014}]
\label{theorem_3_rcol}
 Let $r\ge 3$ and $n\ge 1$ be integers.
Let $X_1, X_2, \dots, X_r$ be $r$ disjoint point sets in the plane.
Assume that no three points of
$X_1 \cup X_2 \cup \dots \cup X_r$ lie on a line, 
 $\sum_{i=1}^r |X_i|=2n$, and
$|X_i| \le n$ for every $i\in [r]$.
Then there exists a noncrossing 
geometric perfect matching on $X_1\cup X_2 \cup \dots \cup X_r$ where every edge connects two points from different sets $X_i$ and $X_j$.
\end{theorem}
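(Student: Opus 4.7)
My plan is to prove Theorem~\ref{theorem_3_rcol} by the classical shortest-matching trick, extended to the colored setting. I would pick a properly colored perfect matching of minimum total Euclidean length and argue that any two crossing edges in it can be replaced by a strictly shorter pair of non-crossing edges that is still properly colored; this forces the minimizer to be noncrossing.

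First I would establish that a properly colored perfect matching exists at all. Ignoring the geometry, this is just a perfect matching in the complete multipartite graph on parts $X_1,\dots,X_r$, and the standard criterion is that such a graph admits a perfect matching precisely when no part exceeds half the vertices. The assumption $|X_i|\le n = (\sum_j |X_j|)/2$ is exactly this condition, so a properly colored perfect matching exists. Among the finitely many such matchings I would fix one, say $M$, of minimum total length.

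The main step is to show that $M$ is noncrossing. Suppose two edges $ab, cd\in M$ cross at a point $p$. Since no three of the given points are collinear, $a,b,c,d$ are in convex position and the strict triangle inequality gives that each of the replacement pairs $\{ac,bd\}$ and $\{ad,bc\}$ has strictly smaller total length than $\{ab,cd\}$. It remains to verify that at least one replacement is itself properly colored. This is the only real combinatorial point and reduces to a short case analysis: if both replacements failed to be properly colored, then one of $\mathrm{col}(a)=\mathrm{col}(c)$, $\mathrm{col}(b)=\mathrm{col}(d)$ holds and one of $\mathrm{col}(a)=\mathrm{col}(d)$, $\mathrm{col}(b)=\mathrm{col}(c)$ holds, and in each of the four resulting combinations one derives either $\mathrm{col}(a)=\mathrm{col}(b)$ or $\mathrm{col}(c)=\mathrm{col}(d)$, contradicting the fact that $ab$ and $cd$ were properly colored edges of $M$. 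So at least one swap yields a properly colored perfect matching strictly shorter than $M$, contradicting the choice of $M$.

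The only genuine obstacle is the local check that uncrossing can always be done while keeping the matching properly colored; once that combinatorial verification is in hand, the rest is the standard metric argument that already handles the two-color case. In particular, unlike the higher-dimensional Theorem~\ref{theorem_1_colorful_partition}, no ham-sandwich-type tool appears to be needed here — the planar geometry is enough.
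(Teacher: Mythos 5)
Your argument is correct: the existence of a properly colored perfect matching follows from the standard criterion for complete multipartite graphs (no part larger than half the vertices, even total), both uncrossing swaps $\{ac,bd\}$ and $\{ad,bc\}$ are strictly shorter by the triangle inequality applied at the crossing point (strictness uses general position), and your four-case check correctly shows at least one swap stays properly colored, contradicting minimality. However, this is not the route the present paper takes for Theorem~\ref{theorem_3_rcol}: it is exactly the metric argument of Aichholzer et al.~\cite{ACFFHHHW10_edge}, which the paper cites but deliberately does not reuse, precisely because (as the paper notes) the shortest-matching trick does not extend beyond the plane. The paper's own proof is the one obtained from Theorem~\ref{theorem_main_for_points}: a topological, ham-sandwich-type argument via the discrete hamburger theorem (Theorem~\ref{theorem_discrete}), which repeatedly cuts off a balanced group of $d$ points by a hyperplane and recurses; for $d=2$ and three color classes this yields Theorem~\ref{theorem_3_rcol}, and the case $r\ge 4$ is handled by merging smallest color classes as in Kano--Suzuki--Uno~\cite{KSU2014}. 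The trade-off is clear: your elementary argument is shorter and self-contained but intrinsically planar, while the paper's machinery is heavier but generalizes to $d+1$ color classes in $\mathbb{R}^d$, which is the point of the paper. Two small polish items for your write-up: state (or prove in a line, e.g.\ greedily matching the two largest classes) the multipartite matching criterion rather than only invoking it, and note explicitly that the two edges $ab,cd$ are vertex-disjoint since they belong to a matching, so $a,b,c,d$ are four distinct points in convex position.
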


Aichholzer et al.~\cite{ACFFHHHW10_edge} proved Theorem~\ref{theorem_3_rcol} by the same metric argument as the in case of two colors. Kano, Suzuki and Uno~\cite{KSU2014} first proved Theorem~\ref{theorem_3_rcol} for three colors, by induction using a result on partitions of $3$-colored point sets on a line. Then they derived the case of four or more colors by merging the smallest color classes together.

%%%First, the condition that no color class contains more than half of the points implies that there is at least one matching of the points such that every edge connects points of different colors. The shortest geometric matching with this property is then noncrossing.
%%%

Kano and Suzuki~\cite{KS2014} made the following conjecture generalizing Theorem~\ref{theorem_1_colorful_partition} and Theorem~\ref{theorem_3_rcol}.

\begin{conjecture}[Kano and Suzuki~\cite{KS2014}]
\label{conj_general_dim_col}
Let $r \ge d\ge 3$ and $n\ge 1$ be integers.
Let $X_1, X_2, \dots,\allowbreak X_r$ be $r$ disjoint point sets in $\mathbb{R}^d$.
Assume that no $d+1$ points of
$X_1 \cup X_2 \cup \dots \cup X_r$ lie in a hyperplane, 
 $\sum_{i=1}^r |X_i|=dn$, and
$|X_i| \le n$ for every $i\in [r]$.
Then there exist $n$ pairwise disjoint $(d-1)$-dimensional simplices, each of them having $d$ vertices in $d$ distinct sets $X_i$.
\end{conjecture}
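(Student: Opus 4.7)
The plan is to prove the conjecture by induction on $n$, using the hamburger theorem (the paper's main result) as the central tool for peeling off one rainbow $(d-1)$-simplex at a time. In the base case $n=1$, the hypotheses $\sum_{i=1}^r |X_i| = d$ and $|X_i| \le 1$ force exactly $d$ color classes to be non-empty, each containing a single point; the general position assumption makes these $d$ points affinely independent, so they span a $(d-1)$-simplex, which is rainbow by construction. In the inductive step $n\ge 2$, I would produce a hyperplane $h$ such that (i)~$h$ contains exactly $d$ points of $X_1\cup\cdots\cup X_r$, one from each of $d$ distinct color classes, so that these $d$ points form a rainbow simplex, and (ii)~in each open halfspace $H$ defined by $h$, the induced coloring satisfies the hypotheses of the conjecture with parameter $n_H := \bigl(\sum_j |X_j\cap H|\bigr)/d$, and $n_{H_1}+n_{H_2}=n-1$. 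Given such an $h$, I keep the simplex on $h$ and recurse on the two smaller instances.

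For the case $r=d+1$, I would apply the hamburger theorem to smoothed versions $\mu_i^{(\varepsilon)}$ of the counting measures on the classes, rescaled so that $\sum_j \mu_j^{(\varepsilon)}(\mathbb{R}^d)=1$. Then $\omega_i = |X_i|/(dn) \le 1/d$, so the hypothesis of the hamburger theorem is satisfied, and its conclusion $\mu_i(H)\le \bigl(\sum_j \mu_j(H)\bigr)/d$ is exactly the balance inequality I need in each open halfspace. Taking a convergent limit as $\varepsilon\to 0$ yields a hyperplane in the original discrete setting; the assumption that no $d+1$ points lie on a single hyperplane bounds the number of points on $h$ by $d$, and a genericity or perturbation argument within the family of hyperplanes meeting the hamburger conclusion should arrange that exactly $d$ points, from $d$ distinct color classes, lie on $h$.

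For the general case $r \ge d+2$, my first move would be to reduce to $r=d+1$: whenever two color classes have combined size at most $n$, merge them, reducing $r$ by one while preserving both $|X_i|\le n$ and $\sum_i |X_i|=dn$, and iterate until $r=d+1$. Such a mergeable pair is guaranteed to exist whenever $r \ge 2d$, by a simple averaging argument on the average class size. However, in the intermediate range $d+2 \le r < 2d$---for instance $d=3$, $r=5$ with $|X_i|=3n/5$ for every $i$---no two classes can be merged without exceeding $n$, and this is the step I expect to be the main obstacle. Overcoming it most likely requires either a genuine extension of the hamburger theorem to $r \ge d+1$ measures on $\mathbb{R}^d$ (with an analogous per-measure balance inequality) that can then be applied directly to the original colored point set, or a finer combinatorial argument that produces the hyperplane $h$ without first reducing the number of color classes.
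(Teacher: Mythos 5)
The statement you set out to prove is stated in the paper only as a \emph{conjecture}: the paper itself proves it only in the case $r=d+1$ (Theorem~\ref{theorem_main_for_points}) and explicitly records that it could not go further (see the concluding remarks and Problem~\ref{problem_measures}). Your proposal has the same reach and you say so yourself: merging the two smallest classes is only guaranteed to be possible while $r\ge 2d$, so the reduction stalls at $r=2d-1$, and for $d\ge 3$ the whole range $d+2\le r\le 2d-1$ (your $d=3$, $r=5$, $|X_i|=3n/5$ example) is left without any argument. So, judged against the full conjecture, there is a genuine gap — but it is exactly the part that is genuinely open, not an oversight one could patch; what you can honestly claim is the case $r=d+1$ (plus the classical $r=d$ case), which is precisely what the paper claims.

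Even for $r=d+1$, the step you dismiss as ``a genericity or perturbation argument should arrange that exactly $d$ points, from $d$ distinct color classes, lie on $h$'' is where essentially all of the paper's work lies, and it is not routine: the points are fixed, so genericity is not available, and the balance conclusion of the continuous hamburger theorem alone does not prevent the limiting hyperplane from passing through two points of the same color, nor does it control the residue of the halfspace counts modulo $d$. The paper's route is different in two ways. First, its discrete statement (Theorem~\ref{theorem_discrete}) produces a hyperplane \emph{disjoint} from the points, with both open halfspaces balanced and containing a positive multiple of $d$ points, and the induction simply recurses on the two sides without placing a simplex on the cut; your variant, with a rainbow simplex on $h$ and totals summing to $n-1$, is a legitimate alternative scheme but needs the same kind of control. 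Second, to obtain that control the paper uses the ``Moreover'' clause of Theorem~\ref{theorem_hamburger} (the measure vector lies on the segment $\mathbf{a}\mathbf{c}$) together with Elton--Hill-style perturbed ball weights $\beta_k(\mathbf{p})$ satisfying condition II, which forces the cutting hyperplane to meet the supports of exactly $d$ of the $d+1$ measures, and then a compactness/limit argument followed by the rotation and translation arguments of Observations~\ref{obs_sat_equal}--\ref{obs_touch_all} to achieve balance on both sides and divisibility by $d$ simultaneously. If you keep your scheme, you must supply an analogue of all of this; as written, the discretization step is a hand-wave over the technical heart of the proof.
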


Conjecture~\ref{conj_general_dim_col} holds when $r=d$ by Theorem~\ref{theorem_1_colorful_partition}
or $d=2$ by Theorem~\ref{theorem_3_rcol}.

In this paper we prove Conjecture~\ref{conj_general_dim_col} for every $d\ge 2$ and $r=d+1$. We restate it as the following theorem. 

\begin{theorem}
\label{theorem_main_for_points}
Let $d\ge 2$ and $n\ge 1$ be integers.
Let $X_1, X_2, \dots, X_{d+1}$ be $d+1$ disjoint point sets in $\mathbb{R}^d$.
Assume that no $d+1$ points of $X_1 \cup X_2 \cup \dots \cup X_{d+1}$ lie in a hyperplane, 
$\sum_{i=1}^{d+1} |X_i|=dn$, and
$|X_i| \le n$ for every $i\in [d+1]$.
Then there exist $n$ pairwise disjoint $(d-1)$-dimensional simplices, each of them having $d$ vertices in $d$ distinct sets $X_i$.
\end{theorem}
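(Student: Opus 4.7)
The plan is to imitate Akiyama and Alon's proof of Theorem~\ref{theorem_1_colorful_partition}, using the hamburger theorem (stated in the abstract) in place of the ham sandwich theorem, and proceeding by induction on $n$. The base case $n=1$ is immediate: the constraints $\sum_i |X_i| = d$ and $|X_i| \le 1$ force exactly $d$ of the $d+1$ color classes to be nonempty, and their single points form a rainbow $(d-1)$-simplex by general position.

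For $n \ge 2$, I would replace each $p \in X_i$ by an absolutely continuous probability measure of tiny support, scaled to total mass $1/(dn)$, and sum them to obtain measures $\mu_1, \dots, \mu_{d+1}$ with $\omega_i = |X_i|/(dn) \le 1/d$ and $\sum_i \omega_i = 1$. The hamburger theorem then produces a hyperplane $h$ such that in each open halfspace $H_j$ ($j=1,2$), $\mu_i(H_j) \le \sum_k \mu_k(H_j)/d$. Letting the supports shrink and invoking compactness together with the general position assumption, I may pass to a limiting hyperplane (still called $h$) that contains at most $d$ points of $X := X_1 \cup \dots \cup X_{d+1}$ and satisfies $|X_i \cap H_j| \le |X \cap H_j|/d$ for each $i$ and each open halfspace.

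The crux is to choose $h$ through exactly $d$ points forming a rainbow $(d-1)$-simplex, with $|X \cap H_j|$ divisible by $d$ for both $j$. I would argue this by continuously deforming $h$ within the compact family of balance-preserving hyperplanes---rotating $h$ around points already lying on it to sweep additional points of $X$ onto $h$---so that continuity preserves the balance condition while a parity argument handles the divisibility. Once such $h$ is obtained, removing the $d$ rainbow points on $h$ produces two subproblems on the open halfspaces with $|X_i \cap H_j| \le n_j := |X \cap H_j|/d$, $\sum_i |X_i \cap H_j| = dn_j$, and $n_1 + n_2 = n - 1$, so the induction hypothesis applies to each halfspace and together with the rainbow simplex on $h$ yields the required $n$ disjoint rainbow simplices.

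The main obstacle is precisely this combinatorial extraction of $h$: in Akiyama--Alon the ham sandwich hyperplane is (for $n$ odd) automatically forced by bisection to pass through one point of each of the $d$ color classes, but here one extra color is present and only a proportionality constraint is given, so a more delicate continuity-and-parity argument is needed to produce both the rainbow configuration on $h$ and the divisibility by $d$ of the halfspace counts. I expect most of the work to lie in carefully navigating the space of balance-preserving hyperplanes and in handling degenerate configurations where the hamburger theorem's inequality is attained with equality.
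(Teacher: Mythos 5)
Your overall plan (approximate the colored points by absolutely continuous measures, apply the hamburger theorem, pass to a limit hyperplane, and induct on $n$) is the right skeleton, but the step you yourself flag as ``the crux'' is a genuine gap, and it is exactly where all the work of the paper lies. The continuous hamburger theorem only gives \emph{inequalities} on the halfspace measures, so---unlike the bisection in Akiyama--Alon---nothing forces the limiting hyperplane to pass through any point of $X$, let alone through $d$ points of $d$ \emph{distinct} colors (general position caps the number of points on a hyperplane at $d$, but it does not prevent two of them from sharing a color). In addition, the balance inequalities for the open halfspaces of the limit hyperplane can fail by fractional amounts coming from points sitting on the hyperplane, so one needs an integrality/rounding step, and the divisibility of both halfspace counts by $d$ needs an actual mechanism rather than ``parity'': there is no parity invariant here, since the hamburger constraint is an inequality and the hyperplane can be moved without any point crossing it. Your proposed fix---``continuously deforming $h$ within the compact family of balance-preserving hyperplanes'' and sweeping points onto it---is precisely the statement that needs proof; as stated it is not an argument, and keeping balance, rainbow-ness on $h$, and divisibility simultaneously under such deformations is the hard part.

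For comparison, the paper does not aim for a hyperplane through a rainbow $d$-tuple at all. It proves a discrete hamburger theorem (Theorem~\ref{theorem_discrete}): a hyperplane \emph{disjoint} from $X$ such that both open halfspaces are balanced and each contains a positive multiple of $d$ points; Theorem~\ref{theorem_main_for_points} then follows by a one-line induction in which rainbow simplices appear only at the base cases $n=1$. Establishing Theorem~\ref{theorem_discrete} occupies all of Section~\ref{section_3}: it uses an Elton--Hill-style perturbation of the ball weights (conditions I and II), the ``moreover'' convex-combination clause of Theorem~\ref{theorem_hamburger} (which your statement of the hamburger theorem omits and which is what rules out the hyperplane splitting two color classes simultaneously), a careful limit with a rounding step ($\lceil\alpha\rceil$), and a sequence of rotation arguments about admissible hyperplanes (Observations~\ref{obs_sat_equal}--\ref{obs_touch_all}) to reach a hyperplane avoiding the points with both sides balanced and divisible by $d$. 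Your alternative target (a rainbow $(d-1)$-simplex on $h$ with both open sides balanced and divisible by $d$) may well be true, but it is at least as strong as the paper's discrete theorem and you have not supplied a proof of it; until that lemma is established, the induction does not get off the ground.
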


The proof of Theorem~\ref{theorem_main_for_points} (see Section~\ref{section_3}) provides yet another different proof of Theorem~\ref{theorem_3_rcol}.

Many related results and problems on colored point sets can be found in
a survey by Kaneko and Kano~\cite{KK2003}.

%------------------------------------------------------------------------------
\subsection{Simultaneous partitions of measures}
\label{sec_ham_sandwich}

We denote by $S^n$ the $n$-dimensional unit sphere embedded in $\mathbb{R}^{n+1}$, that is, $S^n=\{\mathbf{x}\in\mathbb{R}^{n+1}; \lVert\mathbf{x}\rVert=1\}$.

The Borsuk--Ulam theorem plays an important role throughout this paper.

\begin{theorem}[The Borsuk--Ulam theorem{~\cite[Theorem 2.1.1]{Mato2003}}]
Let $f:S^n \to \mathbb{R}^n$ be a continuous mapping.
If $f(-\mathbf{u}) = -f(\mathbf{u})$ for all $\mathbf{u}\in S^n$,
%(i.e., $f$ is antipodal), 
then there exists a point $\mathbf{v}\in S^n$ such that
 $f\mathbf{(v}) = \mathbf{0}=(0, 0, \dots, 0)$.
\end{theorem}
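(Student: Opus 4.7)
The plan is to reduce the statement to the equivalent assertion that there is no continuous antipodal map $h\colon S^n\to S^{n-1}$, and then to establish that nonexistence by induction on $n$. The reduction is standard: if $f\colon S^n\to\mathbb{R}^n$ were continuous, odd, and nowhere zero, then $g(\mathbf{u})=f(\mathbf{u})/\lVert f(\mathbf{u})\rVert$ would be a continuous antipodal map $S^n\to S^{n-1}$; conversely, composing any antipodal map $S^n\to S^{n-1}$ with the inclusion $S^{n-1}\hookrightarrow\mathbb{R}^n$ yields a nowhere-zero continuous odd map into $\mathbb{R}^n$. So it suffices to forbid antipodal maps $S^n\to S^{n-1}$.

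For the induction, the base case $n=1$ is immediate: a continuous antipodal map $S^1\to S^0=\{-1,+1\}$ has connected image in a two-point discrete space and hence must be constant, which contradicts $h(-\mathbf{u})=-h(\mathbf{u})$. For the inductive step, suppose $h\colon S^n\to S^{n-1}$ is continuous and antipodal, and restrict it to the equator $S^{n-1}\subset S^n$. The restriction $h|_{S^{n-1}}$ is an antipodal self-map of $S^{n-1}$, and by Borsuk's theorem on antipodal maps every such self-map has odd degree; in particular it is not null-homotopic. On the other hand, $h|_{S^{n-1}}$ extends continuously over the closed upper hemisphere (a topological disk) via $h$ itself, which forces it to be null-homotopic. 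This contradiction completes the induction.

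The main obstacle is precisely Borsuk's auxiliary theorem that every continuous antipodal self-map of $S^{n-1}$ has odd degree, a nontrivial ingredient in its own right. It can be proved either through degree theory in singular $\mathbb{Z}$-homology, or combinatorially through Tucker's lemma, an antipodal Sperner-type statement for symmetric triangulations of the $n$-ball whose boundary triangulates $S^{n-1}$. In fact Tucker's lemma can be used to rule out antipodal maps $S^n\to S^{n-1}$ directly, bypassing the degree-theoretic detour altogether, and this more elementary combinatorial route is essentially the one taken in the reference to Matou\v{s}ek's book cited in the statement.
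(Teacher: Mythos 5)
The paper does not prove this statement at all: it is quoted as a classical result with a citation to Matou\v{s}ek's book, so there is no internal proof to compare against, and any assessment is of your outline on its own terms. Your reduction is the standard one and is correct: a nowhere-zero continuous odd map $f\colon S^n\to\mathbb{R}^n$ normalizes to an antipodal map $S^n\to S^{n-1}$, and forbidding the latter is equivalent to the theorem. The hemisphere argument is also sound: the restriction of a hypothetical antipodal map to the equator is an odd self-map of $S^{n-1}$, hence of odd (in particular nonzero) degree, hence not null-homotopic, yet it extends over the closed upper hemisphere and so must be null-homotopic. Two caveats. First, the induction is vacuous: the inductive hypothesis (no antipodal map $S^{n-1}\to S^{n-2}$) is never used; once you have the odd-degree theorem, the argument is direct for every $n\ge 2$, with $n=1$ handled by connectedness, so you should drop the induction framing. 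Second, and more substantively, the entire difficulty of the Borsuk--Ulam theorem has been relocated into the invoked lemma that every odd self-map of a sphere has odd degree (or, alternatively, into Tucker's lemma); since that lemma is at least as deep as the theorem itself, your text is a reduction to a standard result rather than a self-contained proof. That said, this is consistent with the paper's own practice: in the proof of the hamburger theorem the authors themselves cite exactly this odd-degree fact from Hatcher (Proposition 2B.6) as a black box, and Matou\v{s}ek's book indeed contains the Tucker-lemma route you mention, so your outline would be acceptable provided you state clearly that the odd-degree theorem (or Tucker's lemma) is being used without proof.
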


Informally speaking, the ham sandwich theorem states that a sandwich made of bread, ham and cheese can be cut by a single plane, bisecting the mass of each of the three ingredients exactly in half. According to Beyer and Zardecki~\cite{BZ04_early_ham}, the ham sandwich theorem was conjectured by Steinhaus and appeared as Problem 123 in The Scottish Book~\cite{Scottish81}. Banach gave an elementary proof of the theorem using the Borsuk--Ulam theorem for $S^2$. A more direct proof can be obtained from the Borsuk--Ulam theorem for $S^3$~\cite{Mato2003}. Stone and Tukey~\cite{ST42_generalized} generalized the ham sandwich theorem to $d$-dimensional sandwiches made of $d$ ingredients as follows.

\begin{theorem}[The ham sandwich theorem~\cite{ST42_generalized}, {\cite[Theorem 3.1.1]{Mato2003}}]
\label{theorem_ham_sandwich} 
Let $\mu_1,\mu_2,  \dots$, $\mu_d$ be $d$ absolutely continuous finite Borel measures on
$\mathbb{R}^d$. Then there exists a hyperplane $h$ such that
each open halfspace $H$ defined by $h$ satisfies
$\mu_i(H)=\mu_i(\mathbb{R}^d)/2$ for every $i\in [d]$.
\end{theorem}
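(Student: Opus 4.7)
The plan is to parametrize the set of oriented closed halfspaces of $\mathbb{R}^d$ by the sphere $S^d$ and then apply the Borsuk--Ulam theorem to an antipodal continuous map $g:S^d\to\mathbb{R}^d$ whose zeros correspond to simultaneous bisectors. Concretely, for $\mathbf{u} = (u_0,u_1,\dots,u_d)\in S^d$ I would set
\[
H(\mathbf{u}) = \{\mathbf{x} = (x_1,\dots,x_d)\in\mathbb{R}^d : u_0 + u_1x_1 + \dots + u_dx_d > 0\}.
\]
When $(u_1,\dots,u_d)\neq\mathbf{0}$ this is a genuine open halfspace; at the two degenerate antipodal points $\pm(1,0,\dots,0)$ it degenerates to $\mathbb{R}^d$ or $\emptyset$. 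Writing $\omega_i = \mu_i(\mathbb{R}^d)$, I would then define $g_i(\mathbf{u}) = \mu_i(H(\mathbf{u})) - \omega_i/2$.

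Antipodality is cheap: since $H(-\mathbf{u})$ is the complement of the closure of $H(\mathbf{u})$ and the bounding hyperplane is a Lebesgue null set, absolute continuity of $\mu_i$ yields $\mu_i(H(-\mathbf{u})) = \omega_i - \mu_i(H(\mathbf{u}))$, so $g(-\mathbf{u}) = -g(\mathbf{u})$.

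The main technical step is continuity of $g$, which I expect to be the only real obstacle. For $\mathbf{u}_n\to \mathbf{u}$ with $\mathbf{u}$ nondegenerate, the indicator functions $\chi_{H(\mathbf{u}_n)}$ converge to $\chi_{H(\mathbf{u})}$ pointwise outside the boundary of $H(\mathbf{u})$, which is a $\mu_i$-null set by absolute continuity, so dominated convergence gives $\mu_i(H(\mathbf{u}_n))\to\mu_i(H(\mathbf{u}))$. At the degenerate points $\pm(1,0,\dots,0)$, every ball of fixed radius eventually lies entirely inside $H(\mathbf{u}_n)$ (or entirely outside), and finiteness of $\mu_i$ then forces $\mu_i(H(\mathbf{u}_n))\to\omega_i$ or $0$, matching the value of $g$ there.

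Finally, Borsuk--Ulam applied to the continuous antipodal map $g:S^d\to\mathbb{R}^d$ produces $\mathbf{v}\in S^d$ with $g(\mathbf{v}) = \mathbf{0}$, so $\mu_i(H(\mathbf{v})) = \omega_i/2$ for every $i\in[d]$. Assuming some $\omega_i>0$ (otherwise the statement is vacuous), this $\mathbf{v}$ cannot be a degenerate point, so $H(\mathbf{v})$ is bounded by a genuine affine hyperplane $h$; absolute continuity ensures that the opposite open halfspace also has measure $\omega_i/2$ under each $\mu_i$, which is the required bisecting hyperplane.
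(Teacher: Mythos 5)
Your proof is correct and follows essentially the same route the paper relies on: the paper states the ham sandwich theorem with a reference to Matou\v{s}ek's proof, and that proof (which the paper reuses verbatim in Section~2 for the hamburger theorem) is exactly your argument --- parametrize open halfspaces by $S^d$ including the two degenerate points, note that absolute continuity makes the map $\mathbf{u}\mapsto(\mu_i(H(\mathbf{u})))_i$ continuous and antipodal after subtracting $(\omega_i/2)_i$, and apply the Borsuk--Ulam theorem. Your handling of continuity at the degenerate points and the observation that a zero cannot occur there (when some $\omega_i>0$) are the right details to check, so there is nothing to add.
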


Stone and Tukey~\cite{ST42_generalized} proved more general versions of the ham sandwich theorem, including a version for Carath\'eodory outer measures and more general cutting surfaces. Cox and McKelvey~\cite{CM84_general} and Hill~\cite{Hill88_vectors} generalized the ham sandwich theorem to general finite Borel measures, which include measures with finite support. For these more general measures, the condition $\mu_i(H)=\mu_i(\mathbb{R}^d)/2$ must be replaced by the inequality $\mu_i(H)\le\mu_i(\mathbb{R}^d)/2$. Breuer~\cite{Br10_uneven} gave sufficient conditions for the existence of more general splitting ratios. In particular, he showed that for absolutely continuous measures whose supports can be separated by hyperplanes, there is a hyperplane splitting the measures in any prescribed ratio. See Matou\v{s}ek's book~\cite{Mato2003} for more generalizations of the ham sandwich theorem and other related partitioning results.

To prove Theorem~\ref{theorem_main_for_points}, we follow the approach by Akiyama and Alon~\cite{AA1989}. To this end, we need to generalize the ham sandwich theorem to $d+1$ measures on $\mathbb{R}^d$. Clearly, it is not always possible to bisect all $d+1$ measures by a single hyperplane, for example, if each measure is concentrated on a small ball around a different vertex of a regular simplex.

Let $r\ge d$ and let $\mu_1, \mu_2, \dots, \mu_{r}$ be finite Borel measures on $\mathbb{R}^d$. We say that $\mu_1,\allowbreak \mu_2, \dots,\allowbreak \mu_{r}$ are \emph{balanced} in a subset $X\subseteq \mathbb{R}^d$ if for every $i\in [r]$, we have
\[
\mu_i(X) \le \frac{1}{d}\cdot \sum_{j=1}^{r} \mu_j(X).
\]

\begin{theorem}[The hamburger theorem]
\label{theorem_hamburger}
Let $d\ge 2$ be an integer.
Let $\mu_1, \mu_2, \dots, \mu_{d+1}$ be absolutely continuous finite Borel measures on
$\mathbb{R}^d$. Let $\omega_i=\mu_i(\mathbb{R}^d)$ for $i\in [d+1]$ and $\omega=\min\{\omega_i; i\in [d+1]\}$.
Assume that $\sum_{j=1}^{d+1} \omega_j=1$ and that $\mu_1, \mu_2, \dots, \mu_{d+1}$ are balanced in $\mathbb{R}^d$.
Then there exists a hyperplane $h$ 
%%that bisects the total measure $\sum_{j=1}^{d+1} \mu_j$ exactly in half and 
such that for each open halfspace $H$ defined by $h$, the measures $\mu_1, \mu_2, \dots, \mu_{d+1}$ are balanced in $H$ and $\sum_{j=1}^{d+1} \mu_j(H) \ge \min\{1/2, 1-d\omega\} \ge 1/(d+1)$.

Moreover, setting $t=\min\{1/(2d), 1/d - \omega\}$ and assuming that $\omega_{d+1}=\omega$, the vector $(\mu_1(H),\allowbreak \mu_2(H),\dots,\allowbreak \mu_{d+1}(H))$ is a convex combination of the vectors $(t,t,\dots,t,0)$ and $(\omega_1-t, \omega_2-t, \dots, \omega_d-t,\omega_{d+1})$.
\end{theorem}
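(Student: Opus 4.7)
The plan is to mimic the standard Borsuk--Ulam proof of the ham sandwich theorem, but with the antisymmetric test map engineered so that its zero pins the halfspace measure vector onto the prescribed segment from $A=(t,t,\dots,t,0)$ to $B=(\omega_1-t,\dots,\omega_d-t,\omega_{d+1})$ rather than at the midpoint. I would parametrise oriented hyperplanes by $\mathbf{u}=(u_0,u_1,\dots,u_d)\in S^d$ via the open halfspace $H^+(\mathbf{u})=\{x\in\mathbb{R}^d: u_1x_1+\dots+u_dx_d>u_0\}$, and set $a_i(\mathbf{u})=\mu_i(H^+(\mathbf{u}))$; absolute continuity makes each $a_i$ continuous on $S^d$ and guarantees $a_i(-\mathbf{u})=\omega_i-a_i(\mathbf{u})$.

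The choice of test map is the crux. Parametrising the target segment by $s\in[0,1]$ yields $v_{d+1}=s\omega_{d+1}$ and $v_i=t+s(\omega_i-2t)$, so a vector lies on it exactly when $\omega_{d+1}(v_i-t)=v_{d+1}(\omega_i-2t)$ for every $i\in[d]$. Accordingly, assuming first that $\omega_{d+1}>0$, I define $f=(f_1,\dots,f_d)\colon S^d\to\mathbb{R}^d$ by
\[
f_i(\mathbf{u}) = \omega_{d+1}\bigl(a_i(\mathbf{u}) - t\bigr) - a_{d+1}(\mathbf{u})(\omega_i - 2t),
\]
and a short calculation using $a_i(-\mathbf{u})=\omega_i-a_i(\mathbf{u})$ verifies $f_i(-\mathbf{u})=-f_i(\mathbf{u})$. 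Borsuk--Ulam produces $\mathbf{v}\in S^d$ with $f(\mathbf{v})=0$; writing $s=a_{d+1}(\mathbf{v})/\omega_{d+1}\in[0,1]$, the vanishing of $f$ forces $(a_1(\mathbf{v}),\dots,a_{d+1}(\mathbf{v}))=(1-s)A+sB$. Since the two halfspace vectors sum to $A+B=(\omega_1,\dots,\omega_{d+1})$, the opposite halfspace automatically carries the complementary convex combination $sA+(1-s)B$.

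The rest is a direct verification on $v=(1-s)A+sB$. One computes $\sum_j v_j=dt+s(1-2dt)$ and $v_i-(\sum_j v_j)/d=s(\omega_i-1/d)\le 0$ for $i\in[d]$, using $\omega_i\le 1/d$. The $i=d+1$ balance and the lower bound on $\sum_j v_j$ both split into the two cases $t=1/(2d)$ (forcing $1-2dt=0$ so that $\sum_j v_j=1/2$ on both sides) and $t=1/d-\omega$ (giving $\sum_j v_j\ge dt=1-d\omega$ on both sides), and in each case the required inequalities drop out by a brief algebraic check. The final bound $\min\{1/2,1-d\omega\}\ge 1/(d+1)$ is immediate from $\omega\le 1/(d+1)$, which holds as $\omega$ is the smallest of $d+1$ nonnegative numbers summing to $1$.

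The main obstacle is discovering the right test map: one cannot naively demand that each of the $d+1$ measures be bisected, since that would overdetermine Borsuk--Ulam on $S^d$. The trick is to let $a_{d+1}$ play the dual role of a measure coordinate and of the segment parameter, which is exactly what the definition of $f_i$ encodes. One also has to dispose of the degenerate case $\omega_{d+1}=0$ separately, where $f\equiv 0$ is uninformative; but then the ordinary ham sandwich theorem applied to $\mu_1,\dots,\mu_d$ already returns the midpoint $(A+B)/2$, which satisfies every conclusion of the statement.
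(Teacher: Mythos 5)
Your proposal is correct and takes essentially the same route as the paper: your test map $f_i(\mathbf{u})=\omega_{d+1}\bigl(a_i(\mathbf{u})-t\bigr)-a_{d+1}(\mathbf{u})(\omega_i-2t)$ is, up to a fixed linear isomorphism, exactly the paper's composition of the shifted measure map $g=f-\mathbf{b}$ with the projection $\pi_{\ell}$ along the segment joining $(t,t,\dots,t,0)$ and $(\omega_1-t,\dots,\omega_d-t,\omega_{d+1})$, followed by the same Borsuk--Ulam application and the same algebraic check that every point of that segment is balanced and carries total measure at least $\min\{1/2,\,1-d\omega\}$. Your separate handling of the degenerate case $\omega_{d+1}=0$ via the ordinary ham sandwich theorem is a minor extra precaution that the paper leaves implicit, but it does not change the nature of the argument.
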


Our choice of the name for Theorem~\ref{theorem_hamburger} is motivated by the fact that compared to a typical ham sandwich, a typical hamburger consists of more ingredients, such as bread, beef, bacon, and salad.

Note that the lower bound $\min\{1/2, 1-d\omega\}$ on the total measure of the two halfspaces is tight: consider, for example, $d+1$ measures such that each of them is concentrated on a small ball centered at a different vertex of the unit $d$-dimensional simplex. 

%=================================================================================

\section{Proof of the hamburger theorem}

In this section we prove Theorem~\ref{theorem_hamburger}. We follow the proof of the ham sandwich theorem for measures from~\cite{Mato2003}.

%%Every open halfspace in $\mathbb{R}^d$ can be uniquely written as a set $\{\mathbf{x}\in\mathbb{R}^d;\allowbreak \mathbf{x} \cdot \mathbf{v}<c\}$ for some $\mathbf{v}\in S^{d-1}$ and $c\in \mathbb{R}$. The coordinates $(\mathbf{v},c)$ give the space of halfspaces the topology of $S^{d-1}\times \mathbb{R}$. The space $S^{d-1}\times \mathbb{R}$ can be naturally extended to $S^d$ by adding two points, corresponding to the empty set and to the whole space $\mathbb{R}^d$. We then obtain the following parametrization.

The set of open half-spaces in $\mathbb{R}^d$, together with the empty set and the whole space $\mathbb{R}^d$, has a natural topology of the sphere $S^d$. We use the following parametrization.

Let $\mathbf{u}=(u_0,\allowbreak u_1, \dots,\allowbreak u_d)$ be a point from the sphere $S^d$, that is, 
$u_0^2+u_1^2+ \dots +u_d^2=1$. If $|u_0|<1$, then at least one of the coordinates $u_1, u_2, \dots,\allowbreak u_d$ is nonzero, and we define two halfspaces as follows:
\begin{align*}
&  H^-(\mathbf{u})=\{ (x_1, x_2, \dots, x_d)\in \mathbb{R}^d; u_1x_1 + u_2x_2+ \dots +u_d x_d < u_0 \}, \\
&  H^+(\mathbf{u})=\{ (x_1, x_2, \dots, x_d)\in \mathbb{R}^d; u_1x_1 + u_2x_2+ \dots +u_d x_d > u_0 \}.
\end{align*}
We also define a hyperplane $h(\mathbf{u})$ as the common boundary of $H^-(\mathbf{u})$ and $H^+(\mathbf{u})$.
For the two remaining points $(1,0,\dots,0)$ and $(-1,0,\dots,0)$, we set

\begin{align*}
H^-(1,0,0, \dots, 0) &= \mathbb{R}^d, & H^+(1,0,0, \dots, 0) &= \emptyset,\\
H^-(-1,0,0, \dots, 0)&=\emptyset, & H^+(-1,0,0, \dots, 0) &=\mathbb{R}^d.
\end{align*}

Note that antipodal points on $S^d$ correspond to complementary half-spaces; that is, $H^-(\mathbf{u})=H^+(\mathbf{-u})$ for every $\mathbf{u}\in S^d$.

We define a function $f=(f_1, f_2,\dots, f_{d+1}):S^d \rightarrow \mathbb{R}^{d+1}$ by
\[
f_i(\mathbf{u})=\mu_i(H^-(\mathbf{u})).
\]
Since the measures $\mu_i$ are absolutely continuous, $\mu_i(h(\mathbf{u}))=0$ for every hyperplane $h(\mathbf{u})$. This implies that $f$ is continuous~\cite{Mato2003}. 

The image of $f$ lies in the box $B=\prod_{i=1}^{d+1} [0,\omega_i]$. Moreover, $f$ maps antipodal points of the sphere to points symmetric about the center of $B$. 
%%Every point $(y_1, y_2, \dots, y_{d+1})$ of $B$ corresponds to a nonnegative partition $(y_1, y_2, \dots, y_{d+1}) + (\omega_1-y_1, \omega_2-y_2, \dots, \omega_{d+1}-y_{d+1})$ of the vector $(\omega_1, \omega_2, \dots, \omega_{d+1})$. We say that a partition of this form is \emph{balanced} if for every $i\in[d+1]$, we have
%%
The \emph{target polytope} is the subset of points $(y_1, y_2, \dots, y_{d+1})$ of $B$ satisfying the inequalities
\begin{equation}
\label{equation_balanced}
y_i\le \frac{1}{d}\cdot\sum_{j=1}^{d+1}y_j \hskip 5mm \text{ and } \hskip 5mm  \omega_i-y_i\le \frac{1}{d}\cdot\sum_{j=1}^{d+1} (\omega_j-y_j). 
\end{equation}
%%We call the set of points of $B$ satisfying (\ref{equation_balanced}) the \emph{target polytope}; 
See Figure~\ref{fig_2_target_polytope}, left. The subset of the target polytope satisfying the inequalities 
\begin{equation}
\label{equation_truncated}
\min\{1/2, 1-d\omega\}\le y_1+y_2+\cdots +y_{d+1}\le 1-\min\{1/2, 1-d\omega\}
\end{equation}
is called the \emph{truncated target polytope}; see Figure~\ref{fig_2_target_polytope}, right.
Our goal is to show that the image of $f$ intersects the truncated target polytope.
%, and moreover, that the intersection contains a point that is sufficiently far from the two corners $(0,0,\dots,0)$ and $(\omega_1,\omega_2,\dots,\omega_{d+1})$.

\begin{figure}
\begin{center}
\epsfbox{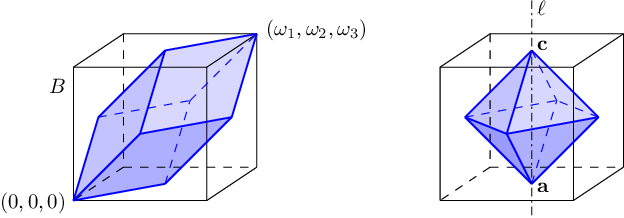}
\end{center}
\caption{Left: the target polytope inside $B$ for $d=2$ and $\omega_1=\omega_2=\omega_3=1/3$. Right: the truncated target polytope corresponding to hyperplanes cutting at least $1/3$ of the total measure on both sides. The segment $\mathbf{a}\mathbf{c}$, which is the intersection of the line $\ell$ with $B$, is contained in the truncated target polytope.}
\label{fig_2_target_polytope}
\end{figure}

We first show a proof using the notion of a \emph{degree} of a map between spheres. Then we modify it so that it uses only the Borsuk--Ulam theorem.

Let $\mathbf{b}=(\omega_1/2,\allowbreak \omega_2/2, \dots,\allowbreak \omega_{d+1}/2)$ be the center of $B$. The map $g=f-\mathbf{b}$ is antipodal, that is, $g(-\mathbf{u})=-g(\mathbf{u})$ for every $\mathbf{u}\in S^d$. Clearly, $\mathbf{b}$ satisfies both (\ref{equation_balanced}) and (\ref{equation_truncated}) and thus it belongs to the truncated target polytope. Hence, if $\mathbf{0}$ is in the image of $g$, then any hyperplane $h(\mathbf{u})$ such that $g(\mathbf{u})=\mathbf{0}$ satisfies the theorem. 

For the rest of the proof we may assume that $\mathbf{0}$ is not in the image of $g$. Then we can define an antipodal map $\tilde{g}:S^d\rightarrow S^d$ as 
\[
\tilde{g}(\mathbf{u})=\frac{g(\mathbf{u})}{\lVert g(\mathbf{u})\rVert}.
\]
Using the fact that every antipodal map from $S^d$ to itself has odd degree~\cite[Proposition 2B.6.]{Hatcher02_AT}, we conclude that $\tilde{g}$ is surjective. Hence, the image of $g$ intersects every line passing through the origin, equivalently, the image of $f$ intersects every line passing through $\mathbf{b}$. Therefore, it is sufficient to find a line $\ell$ through $\mathbf{b}$ such that $\ell\cap B$
% \subseteq P$.
belongs to the truncated target polytope.

Without loss of generality, we may assume that $\omega_1 \ge \omega_2 \ge \dots \ge \omega_{d+1} = \omega$. Let 
\[
t=\min\left\{\frac{1}{2d}, \frac{1}{d}-\omega_{d+1}\right\}.
\]
We define $\ell$ as the line containing the points 
\begin{align*}
\mathbf{a}&=(t,t,\dots,t,0) \ \text{ and } \\
\mathbf{c}&=(\omega_1-t, \omega_2-t, \dots, \omega_d-t,\omega_{d+1}). 
\end{align*}
Since the measures $\mu_1, \mu_2, \dots, \mu_{d+1}$ are balanced in $\mathbb{R}^d$, we have $\omega_i\le 1/d$ for all $i\in [d+1]$. Thus $\omega_d+\omega_{d+1}=1-(\omega_1 + \omega_2 + \cdots + \omega_{d-1})\ge 1/d$ and consequently $\omega_d \ge 1/(2d)\ge t$. This implies that both points $\mathbf{a}$ and $\mathbf{c}$ lie in $B$. Moreover, $\mathbf{a}$ and $\mathbf{c}$ lie on the opposite facets of $B$ and they are symmetric around the center $\mathbf{b}$. The points $\mathbf{a}$ and $\mathbf{c}$ both satisfy (\ref{equation_balanced}) since $(\omega_1-t)+(\omega_2-t)+ \cdots + (\omega_d-t)+\omega_{d+1} = 1-dt \ge d\omega_{d+1}$. They also both satisfy (\ref{equation_truncated}) since $dt\le 1-dt$. Therefore, the segment $\mathbf{a}\mathbf{c}$ is the intersection of the line $\ell$ with $B$ and it is contained in the truncated target polytope.

  Now we show how to replace the degree argument with the application of the Borsuk--Ulam theorem. Let $\pi_{\ell}$ be a projection of $\mathbb{R}^{d+1}$ in the direction of the line $\ell$ to a $d$-dimensional subspace orthogonal to $\ell$, which we identify with $\mathbb{R}^d$. Define a map $g':S^d\rightarrow \mathbb{R}^d$ by $g'(\mathbf{u})= \pi_{\ell}(g(\mathbf{u}))$. The map $g'$ is antipodal, and so by the Borsuk--Ulam theorem, there exists $\mathbf{u}\in S^d$ such that $g'(\mathbf{u})=\mathbf{0}$, which means that $f(\mathbf{u})\in\ell$. This concludes the proof.

%===================================================================================================
\section{A discrete version of the hamburger theorem}
\label{section_3}

Theorem~\ref{theorem_main_for_points} follows by induction from the following discrete analogue of the hamburger theorem. 

We say that point sets $X_1, X_2, \dots, X_{r}$ are \emph{balanced} in a subset $S\subseteq \mathbb{R}^d$ if for every $i\in[r]$, we have
\[
\lvert S\cap X_i\rvert \le \frac{1}{d}\cdot \sum_{j=1}^{r} \lvert S\cap X_j\rvert.
\]

\begin{theorem}
\label{theorem_discrete}
Let $d\ge 2$ and $n\ge 2$ be integers.
Let $X_1, X_2, \dots, X_{d+1}\subset \mathbb{R}^d$ be $d+1$ disjoint point sets balanced in $\mathbb{R}^d$.
Assume that no $d+1$ points of $X_1 \cup X_2 \cup \dots \cup X_{d+1}$ lie in a hyperplane and that $\sum_{i=1}^{d+1} |X_i|=dn$.
Then there exists a hyperplane $h$ disjoint with each $X_i$ such that 
%that intersects exactly $d$ of the sets $X_i$, and 
for each open halfspace $H$ determined by $h$, the sets $X_1, X_2, \dots, X_{d+1}$ are balanced in $H$ and $\sum_{i=1}^{d+1} |H\cap X_i|$ is a positive integer multiple of $d$.
\end{theorem}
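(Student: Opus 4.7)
The plan is to deduce Theorem~\ref{theorem_discrete} from Theorem~\ref{theorem_hamburger} by the standard trick of replacing the atomic point sets by absolutely continuous approximations. For each $p \in X_i$, let $\mu_p^\varepsilon$ be the uniform probability measure of total mass $1/(dn)$ supported on a ball $B_\varepsilon(p)$, and put $\mu_i^\varepsilon := \sum_{p \in X_i} \mu_p^\varepsilon$. Then $\omega_i := \mu_i^\varepsilon(\mathbb{R}^d) = |X_i|/(dn) \le 1/d$ and $\sum_i \omega_i = 1$, and the discrete balance of $X_1,\ldots,X_{d+1}$ in $\mathbb{R}^d$ translates exactly into the continuous balance of $\mu_1^\varepsilon,\ldots,\mu_{d+1}^\varepsilon$ in $\mathbb{R}^d$. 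I would choose $\varepsilon>0$ small enough that the balls $B_\varepsilon(p)$ are pairwise disjoint and that no hyperplane meets more than $d$ of them; the second property uses the general-position hypothesis on $\bigcup_i X_i$ via a compactness argument over all $(d+1)$-tuples of points.

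Apply Theorem~\ref{theorem_hamburger} (including the \emph{moreover} part) to $\mu_1^\varepsilon,\ldots,\mu_{d+1}^\varepsilon$, obtaining a hyperplane $h_\varepsilon$ whose open halfspaces are balanced and whose measure vector $(\mu_1^\varepsilon(H_\varepsilon^-),\ldots,\mu_{d+1}^\varepsilon(H_\varepsilon^-))$ lies on the segment from $(t,\dots,t,0)$ to $(\omega_1-t,\dots,\omega_d-t,\omega_{d+1})$. Taking a subsequential limit $h_{\varepsilon_k}\to h_0$ (using compactness of the parametrization by $S^d$), I obtain a limit hyperplane meeting $\bigcup_i X_i$ in a set $P$ of size at most $d$; every other point lies strictly inside one of the open halfspaces of $h_0$, defining integer counts $a_i^\pm := |X_i \cap H_0^\pm|$. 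Any sufficiently small perturbation of $h_0$ disjoint from $\bigcup_i X_i$ is encoded by an assignment $\beta : P \to \{-,+\}$, and produces a hyperplane $h'$ with integer counts $\nu_i^\pm(h') = a_i^\pm + |\beta^{-1}(\pm)\cap X_i|$.

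The main task, and the step I expect to be the main obstacle, is to select $\beta$ so that $\nu_1^\pm(h'),\ldots,\nu_{d+1}^\pm(h')$ are balanced on both sides and their total on one side is a positive multiple of $d$. The fractional balance inherited from $h_{\varepsilon_k}$ in the limit gives values $r_i^\pm \in [0,|P\cap X_i|]$, the limiting fractional contributions of the straddling balls, satisfying $a_i^\pm + r_i^\pm \le (S^\pm + R^\pm)/d$ with $S^\pm := \sum_i a_i^\pm$ and $R^\pm := \sum_i r_i^\pm$; by the \emph{moreover} part, the fractional total $S^- + R^-$ equals the explicit value on the segment (e.g.\ $dn/2$ when $\omega \le 1/(2d)$). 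Among the $|P|+1$ consecutive integer candidates $S^-, S^-+1, \ldots, S^-+|P|$ for $\sum_i \nu_i^-(h')$, I would pigeonhole modulo $d$ to locate a positive multiple of $d$, and then realise this total by a Hall-/flow-type integer rounding of the $r_i^\pm$ that preserves the integer balance $\nu_i^\pm(h') \le (1/d)\sum_j \nu_j^\pm(h')$ on both sides. The delicate point is that, when the fractional total is not itself a multiple of $d$, one must show that sufficiently many points of $\bigcup X_i$ lie on $h_0$ (ideally $|P|$ close to $d$) for the pigeonhole to succeed; I expect this to follow from the \emph{moreover} relation together with the $1/(dn)$ granularity of the ball measures, combined with the freedom to permute colour indices in the statement of Theorem~\ref{theorem_hamburger}.
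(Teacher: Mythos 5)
Your overall strategy (approximate the points by tiny balls, apply Theorem~\ref{theorem_hamburger} with its ``moreover'' part, pass to a limit hyperplane, then perturb to restore integrality) is the same as the paper's, but the two steps you yourself flag as the main obstacles are genuine gaps, and they are exactly where the paper's proof does its real work. First, with equal weights $1/(dn)$ on all balls there is no mechanism forcing the limit hyperplane to pass through ``close to $d$'' points of $X$: nothing prevents the hamburger hyperplane from splitting only one or two of the colours fractionally (the coordinates of a point on the segment from $(t,\dots,t,0)$ to $(\omega_1-t,\dots,\omega_d-t,\omega_{d+1})$ can simultaneously coincide with integer multiples of $1/(dn)$ for several colours), so $|P|$ may be $1$ or $2$ and your pigeonhole over the $|P|+1$ consecutive totals fails for $d\ge 3$. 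The paper removes this degeneracy by the Elton--Hill-type perturbation: the ball weights $\beta_k(\mathbf{p})\in(1-1/k,1+1/k)$ are chosen so that each colour class keeps total mass $|X_i|$ (condition I) but no two colours can simultaneously have proper-subset partial sums matching a point of the segment (condition II). Combined with the ``moreover'' part, this forces one of only two situations: either some colour has measure zero on one side, or the cutting hyperplane meets the support of exactly $d$ colours, one ball each. Your proposal has no substitute for this genericity device, and ``I expect this to follow from the moreover relation together with the $1/(dn)$ granularity'' is precisely the claim that is false without it.

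Second, even in the good case $|P|=d$, choosing which side each straddled point goes to so that \emph{both} open halfspaces remain balanced \emph{and} the total on one side is a positive multiple of $d$ is not a routine ``Hall-/flow-type rounding'': each straddled point contributes a whole unit to a single colour on a single side, the balance constraints can be tight, and realizing a prescribed total modulo $d$ may conflict with balance for a static assignment. The paper does not round statically; it moves through the continuum of \emph{admissible} hyperplanes (those touching or cutting the $d$ straddled balls while staying balanced on both sides), using the rotation and saturation arguments of Observations~\ref{obs_sat_equal}--\ref{obs_touch_all} to reach a hyperplane touching all $d$ balls, and a final rotation to fix divisibility by $d$; the case where one colour has measure zero on a side needs a separate translation-and-ceiling argument (the computation bounding $\mu_i(\tilde H')$ by $n-\lceil\alpha\rceil$). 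None of this is supplied in your sketch, so as written the proposal does not yet constitute a proof, although its first (approximation and limit) stage is sound and matches the paper's.
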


While preparing the final version of this paper, we have learned that Biniaz, Maheshwari, Nandy and Smid~\cite{BMNS15_an_optimal} independently proved the plane version of Theorem~\ref{theorem_discrete}, and they gave a linear-time algorithm for computing the cutting line.

\subsection{Proof of Theorem~\ref{theorem_discrete}}
%\begin{proof}

%- - - - - - - - - - - - - - - - - - - - - - - - - - - - - - - - - - - - - 
\paragraph{Approximating point sets by measures.}
Let $X=\bigcup_{i=1}^{d+1} X_i$. 
Replace each point $\mathbf{p}\in X$ by an open ball $B(\mathbf{p})$ of a sufficiently small radius $\delta>0$ centered in $\mathbf{p}$, so that no hyperplane intersects more than $d$ of these balls. We will apply the hamburger theorem for suitably defined measures supported by the balls $B(\mathbf{p})$. Rather than taking the same measure for each of the balls, we use a variation of the trick used by Elton and Hill~\cite{EH11_stronger_conclusion}. For each $\mathbf{p}\in X$ and $k\ge 1$, we choose a number $\beta_k(\mathbf{p})\in (1-1/k,1+1/k)$ so that the following two conditions are satisfied.
\begin{enumerate}
\item[I)] For every $i\in [d+1]$, we have
%
%\begin{equation}\label{eq_maximalni}
$\sum_{\mathbf{p}\in X_i} \beta_k(\mathbf{p}) = \lvert X_i\rvert$. % \text{, and}
%\end{equation}
%
\item[II)] Let $\omega_i=\lvert X_i \rvert /\lvert X\rvert$ for $i\in [d+1]$, suppose that $\omega_{d+1}=\min\{\omega_i; i\in [d+1]\}$ and let $t=\min\{1/(2d), 1/d - \omega_{d+1}\}$.
For every $i,j\in [d+1]$, $i\neq j$, and for every pair of proper nonempty subsets $Y\subset X_i$ and $Z\subset X_j$, there is no vector $(\gamma_1,\gamma_2,\dots,\gamma_{d+1})\in\mathbb{R}^{d+1}$ that is a convex combination of the vectors $(t,t,\dots,t,0)$ and $(\omega_1-t, \omega_2-t, \dots, \omega_d-t,\omega_{d+1})$ and satisfies $\gamma_i=\sum_{\mathbf{p}\in Y} \beta_k(\mathbf{p})$ and $\gamma_j=\sum_{\mathbf{p}\in Z} \beta_k(\mathbf{p})$.
\end{enumerate}

%%
%\begin{equation}\label{eq_epsilony}
%\sum_{\mathbf{p}\in Y} (1-\varepsilon_k(\mathbf{p})) \neq \sum_{\mathbf{p}\in Z} (1-\varepsilon_k(\mathbf{p})).
%\end{equation}
%%
%Moreover, for each $i$ such that $|X_i|=n$, we require that 
%%
%\begin{equation}\label{eq_maximalni}
%%\sum_{\mathbf{p}\in X_i} (1-\varepsilon_k(\mathbf{p})) =\frac{1}{d}\cdot\sum_{j=1}^{d+1}\sum_{\mathbf{p}\in X_j} (1-\varepsilon_k(\mathbf{p})).
%\sum_{\mathbf{p}\in X_i} (1-\varepsilon_k(\mathbf{p})) =\frac{1}{d}\cdot\sum_{\mathbf{p}\in X} (1-\varepsilon_k(\mathbf{p})).
%\end{equation}
%%
Now let $k\ge 1$ be a fixed integer. For each $i\in [d+1]$, let $\mu_{i,k}$ be the measure supported by the closure of $\bigcup_{\mathbf{p}\in X_i} B(\mathbf{p})$ such that it is uniform (that is, equal to a multiple of the Lebesgue measure) on each of the balls $B(\mathbf{p})$ and 
$\mu_{i,k}(B(\mathbf{p}))=\beta_k(\mathbf{p})$. 
%$\mu_{i,k}(B(\mathbf{p}))=1-\varepsilon_k(\mathbf{p})$. 
%The condition (\ref{eq_epsilony}) implies that if $0<\mu_i(H)=\mu_{j,k}(H)<\mu_{i,k}(\mathbb{R}^d)$ for some halfspace $H$ and $i\neq j$, then the hyperplane that forms the boundary of $H$ intersects the support of $\mu_{i,k}$ or $\mu_{j,k}$.

%- - - - - - - - - - - - - - - - - - - - - - - - - - - - - - - - - - - - - 
\paragraph{Applying the hamburger theorem and a final rounding step.}
By condition I), the measures $\mu_{1,k}, \mu_{2,k}, \dots, \mu_{d+1,k}$ are balanced in $\mathbb{R}^d$. % if the numbers $\varepsilon_k(\mathbf{p})>0$ are sufficiently small. 
We may thus apply the hamburger theorem to the normalized collection of measures 
%$\mu_{i,k}/(\sum_{j=1}^{d+1} \mu_{j,k}(\mathbb{R}^d))$. 
$\mu_{i,k}/\lvert X \rvert$.
Let $h_k$ be the resulting hyperplane. We distinguish two cases.

1) We have $\mu_{i,k}(H)=0$ for some $i\in[d+1]$ and some halfspace $H$ determined by $h_k$. Since the measures $\mu_{1,k}, \mu_{2,k}, \dots, \mu_{d+1,k}$ are balanced in $H$, there is an $\alpha>0$ such that $\mu_{j,k}(H)=\alpha$ for every $j\in[d+1]\setminus \{i\}$.

2) The hyperplane $h_k$ splits each measure $\mu_{i,k}$ in a nontrivial way. By condition II), $h_k$ intersects the support of exactly $d$ of the measures $\mu_{i,k}$.

For each $i\in [d+1]$, let $\mu_i$ be the limit of the measures $\mu_{i,k}$ when $k$ grows to infinity; that is, $\mu_i$ is uniform on every ball $B(\mathbf{p})$ such that $\mathbf{p}\in X_i$ and $\mu_i(B(\mathbf{p}))=1$. Since the supports of all the measures $\mu_{i,k}$ are uniformly bounded, there is an increasing sequence $\{k_m; m=1,2,\dots\}$ such that the sequence of hyperplanes $h_{k_m}$ has a limit $h'$. More precisely, if $h_{k_m}=\{\mathbf{x}\in \mathbb{R}^d; \mathbf{x} \cdot \mathbf{v}_m=c_m\}$ where $\mathbf{v}_m \in S^{d-1}$, then $h'=\{\mathbf{x}\in \mathbb{R}^d; \mathbf{x} \cdot \mathbf{v}=c\}$ where $\mathbf{v} = \lim_{m\rightarrow \infty} \mathbf{v}_m$ and $c = \lim_{m\rightarrow \infty} c_m$. By the absolute continuity of the measures, the measures $\mu_1, \mu_2, \dots, \mu_{d+1}$ are balanced in each of the two halfspaces determined by $h'$, and the total measure $\sum_{j=1}^{d+1}\mu_j$ of each of the two halfspaces is at least $n/(d+1)$. One of the cases 1) or 2) occurred for infinitely many hyperplanes $h_{k_m}$, $m\ge 1$. We distinguish the two possibilities.

Suppose that case 1) occurred infinitely many times. For each $m\ge 1$, let $H^+_{k_m}=\{\mathbf{x}\in \mathbb{R}^d; \mathbf{x} \cdot \mathbf{v}_m>c_m\}$ and $H^-_{k_m}=\{\mathbf{x}\in \mathbb{R}^d; \mathbf{x} \cdot \mathbf{v}_m<c_m\}$ be the two open halfspaces determined by $h_{k_m}$.
There is an $i\in [d+1]$ such that $\mu_{i,k_m}(H^+_{k_m})=0$ occurred for infinitely many $m\ge 1$ or $\mu_{i,k_m}(H^-_{k_m})=0$ occurred for infinitely many $m\ge 1$. By the absolute continuity of the measures, there is an $\alpha\ge n/(d+1)>0$ such that for one of the halfspaces $H$ determined by $h'$ and for every $j\in[d+1]\setminus \{i\}$, we have $\mu_{j}(H)=\alpha$. 

If $h'$ is disjoint from all the balls $B(\mathbf{p})$, $\mathbf{p}\in X$, then this hyperplane satisfies the conditions of the theorem. Otherwise, $h'$ intersects one ball from the support of each of the measures $\mu_j$, $j\in[d+1]\setminus \{i\}$. Let $h$ be the translation of $h'$ that touches each of these $d$ balls and such that $h$ is in the complement of $H$. Let $\tilde{H}$ be the open halfspace determined by $h$ containing $H$, and let $\tilde{H}'$ be the open halfspace opposite to $\tilde{H}$. Then for every $j\in[d+1]\setminus \{i\}$, we have $\mu_{j}(\tilde{H})=\lceil\alpha\rceil$. In particular, the sets $X_1, X_2, \dots, X_{d+1}$ are balanced in $\tilde{H}$ and $0<|\tilde{H} \cap X|=d\lceil\alpha\rceil<dn$. It remains to show that $X_1, X_2, \dots, X_{d+1}$ are also balanced in $\tilde{H}'$. Let $H'$ be the open halfspace opposite to $H$. Since the measures $\mu_1, \mu_2, \dots, \mu_{d+1}$ are balanced in $H'$, for each $k\in[d+1]\setminus \{i\}$ we have
\begin{align*}
\mu_k(\tilde{H}') &= (\alpha- \lceil\alpha\rceil)+ \mu_k(H')\le (\alpha-\lceil\alpha\rceil) + \frac{1}{d}\cdot\sum_{j=1}^{d+1}\mu_j(H')=\frac{1}{d}\cdot \sum_{j=1}^{d+1}\mu_j(\tilde{H}'),
\end{align*}
and for the measure $\mu_i$ we have
\begin{align*}
\mu_i(\tilde{H}') &= \mu_i(H')\le \frac{1}{d}\cdot\sum_{j=1}^{d+1}\mu_j(H') = \frac{1}{d}\cdot\left(\sum_{j=1}^{d+1}\mu_j(\tilde{H}')\right) + (\lceil\alpha\rceil-\alpha)\\
&=(n-\lceil\alpha\rceil) + (\lceil\alpha\rceil-\alpha) = n-\alpha.
\end{align*}
Since $\mu_i(\tilde{H}')=|X_i|$, we can replace the upper bound by the nearest integer, and thus we have 
\[
\mu_i(\tilde{H}') \le n-\lceil\alpha\rceil = \frac{1}{d}\cdot\sum_{j=1}^{d+1}\mu_j(\tilde{H}').
\]

%%%ZDE - This is only a special case - may be proved together with case 2? but because of nontriviality of the partition perhaps it is better to have it separately

Finally, suppose that case 2) occurred infinitely many times. 
There is an $i\in [d+1]$ such that for infinitely many $m\ge 1$, the hyperplane $h_{k_m}$ intersects the support of each measure $\mu_{j,k_m}$, $j\in [d+1]\setminus \{i\}$. In particular, for each $j\in [d+1]\setminus \{i\}$, there is a point $\mathbf{p}_j\in X_j$ such that for infinitely many $m\ge 1$, the hyperplane $h_{k_m}$ intersects each of the balls $B(\mathbf{p}_j)$. It follows that $h'$ intersects or touches each of the balls $B(\mathbf{p}_j)$. Moreover, each of the halfspaces determined by $h'$ contains at least one point of $X_i$. 

%%%%%% NEW %%%%%

We now rotate the hyperplane $h'$ slightly, to a hyperplane $h$ that touches each of the balls $B(\mathbf{p}_j)$ with $j\in [d+1]\setminus \{i\}$, so that the point sets $X_1, X_2, \dots, X_{d+1}$ are balanced in each halfspace determined by $h$ and the number of points of $X$ in each halfspace is divisible by $d$. Each of the two halfspaces will still contain a positive number of points from $X_i$, so the resulting partition of $X$ will be nontrivial. Essentially, for each point $\mathbf{p}_j$,  $j\in [d+1]\setminus \{i\}$, we can decide independently on which side of $h$ it will end, thus we are choosing one of $2^d$ possible rotations. Let $Y=\{\mathbf{p}_j; j\in [d+1]\setminus \{i\}\}$ be the set of these $d$ ``movable'' points.

Let $H'_A$ and $H'_B$ be the two open halfspaces determined by $h'$. We may consider $h'$ to be horizontal, $H'_A$ to be the halfspace above $h'$ and $H'_B$ the halfspace below $h'$. We now count the points of $X\setminus Y$, which will remain above both $h'$ and $h$ or below both $h'$ and $h$, no matter which rotation of $h'$ we choose.
For each $j\in [d+1]\setminus \{i\}$, let $a'_j=|(X_j\setminus\{\mathbf{p}_j\})\cap H'_A|$ and $b'_j=|(X_j\setminus\{\mathbf{p}_j\})\cap H'_B|$. Let $a'_i=|X_i\cap H'_A|$ and $b'_i=|X_i\cap H'_B|$. Clearly, we have $|X_j|=a'_j+b'_j+1$ if $j\neq i$ and $|X_i|=a'_i+b'_i$. Finally, let $a'=|(X\setminus Y)\cap H'_A|$ and $b'=|(X\setminus Y)\cap H'_B|$. Clearly, we have $a'=\sum_{j=1}^{d+1} a'_j$, $b'=\sum_{j=1}^{d+1} b'_j$ and $a'+b'=d(n-1)$.

For each $j\in [d+1]\setminus \{i\}$, let $\alpha_j=\mu_j(B(\mathbf{p}_j)\cap H'_A)$, and let $\alpha=\sum_{j\in[d+1]\setminus\{i\}}\alpha_j$. Clearly, we have $0\le \alpha_j\le 1$, $0\le \alpha \le d$, $\mu_j(H'_A)=a'_j+\alpha_j$, $\mu_j(H'_B)=b'_j+1-\alpha_j$, the total measure  $\sum_{j=1}^{d+1}\mu_j$ of  $H'_A$ is equal to $a'+\alpha$, and the total measure of $H'_B$ is equal to $b'+d-\alpha$.

Let $m_A=\max\{a'_j; j\in [d+1]\}$ and $m_B=\max\{b'_j; j\in [d+1]\}$.
Since the measures $\mu_1, \mu_2, \dots, \mu_{d+1}$ are balanced in $H'_A$ and also in $H'_B$, we have $dm_A\le a'+\alpha$ and $dm_B\le b'+ d-\alpha$, which implies that $m_A+m_B\le n$.

To get a positive integer multiple of the points of $X$ in each halfspace determined by $h$, the numbers of points of $Y$ that have to be moved above and below $h$ are uniquely determined, except in the case when $a'$ and $b'$ are both divisible by $d$, in which case either all points of $Y$ have to be moved above $h$ or all below $h$. We consider this case separately.

Assume that $a'$ and $b'$ are both divisible by $d$; that is, $a'=dn'_A$ and $b'=dn'_B$ for some positive integers $n'_A,n'_B$. If $\alpha=0$ or $\alpha=d$, the hyperplane $h'$ already satisfies the conditions of the theorem and we can put $h=h'$. If $\alpha \in (0,d)$, then $m_A\le n'_A$ and $m_B \le n'_B$. In this case, we can move all the points of $Y$ above $h$ (alternatively, all below $h$). Then there will be exactly $d(n'_A+1)$ points of $X$ above $h$, $dn'_B$ points of $X$ below $h$, and each $X_j$ will have at most $n'_A+1$ points above $h$ and at most $n'_B$ points below $h$. Thus $h$ will satisfy the conditions of the theorem.

For the rest of the proof, assume that $a'$ and $b'$ are not divisible by $d$, and let $r=b' \text{ mod } d$. Then exactly $d-r$ points of $Y$ have to be moved below $h$ and the remaining $r$ points above $h$.

If $m_A+m_B= n$, then necessarily $dm_A= a'+\alpha$, $dm_B= b'+ d-\alpha$ and $r=\alpha$. We need to select a subset of $r$ points of $Y$ that will be moved above $h$, so that each $X_j$ has still at most $m_A$ points above $h$, and similarly with points that are moved below $h$. 
For $j\in [d+1]$, call a point set $X_j$ \emph{saturated from above} if $a'_j=m_A$, and \emph{saturated from below} if $b'_j=m_B$. For $j\neq i$, the set $X_j$ cannot be saturated both from above and below, since $a'_j+b'_j=|X_j|-1\le n-1= m_A+m_B-1$. If $X_j$ is not saturated from above, we can move $\mathbf{p}_j$ above $h$, and if $X_j$ is not saturated from below, we can move $\mathbf{p}_j$ below $h$. It remains to verify that among the sets $X_j, j\in [d+1]\setminus \{i\}$, at most $r=\alpha$ of them are saturated from below and at most $d-r=d-\alpha$ of them saturated from above. This follows from the facts that if $X_j$ is saturated from above and $j\neq i$, then $\alpha_j=0$; and similarly, if $X_j$ is saturated from below and $j\neq i$, then $1-\alpha_j=0$.

Now assume that $m_A+m_B\le n-1$. In this case we have $dm_A\le a'$ or $dm_B\le b'$. By symmetry, we may assume that $dm_B\le b'$, which implies a stronger inequality $d(m_B+1)\le b'-r+d$. This means that we can move an arbitrary subset of $d-r$ points of $Y$ below $h$ to keep the point sets $X_1, X_2, \dots, X_{d+1}$ balanced in the halfspace below $h$. Since $dm_A\le a'+ \alpha$, we have $da'_i \le a'+r$. For $j\in [d+1]\setminus \{i\}$, call a point set $X_j$ \emph{saturated from above} if $da'_j=a'+r$. Let $s$ be the number of the sets $X_j$, $j\neq i$, that are saturated from above. We need to verify that $s\le d-r$. If $X_j$ is saturated from above then $a'_j=m_A$ and thus $r\le \alpha$. Furthermore, for such $X_j$ we have $d(a'_j+\alpha_j)\le a'+ \alpha$, which implies $\alpha_j\le (\alpha-r)/d$. Hence,  
\[
\alpha=\sum_{j\in[d+1]\setminus\{i\}}\alpha_j \le s(\alpha-r)/d + d-s = d-s(d+r-\alpha)/d. 
\]
Using the inequalities $0<r\le\alpha\le d$, this further implies
\[
s\le\frac{d^2-d\alpha}{d+r-\alpha} \le \frac{d^2-d\alpha+r(\alpha-r)}{d+r-\alpha} = \frac{(d+r-\alpha)(d-r)}{d+r-\alpha} = d-r.
\]
%
%...
%If $s>d-r$, this further implies
%\begin{align*}
%\alpha&<d-(d-r)(d+r-\alpha)/d,\\
%\alpha(2d-r)&<r^2,\\
%r(2d-r)&<r^2,\\
%2d&< 2r,
%\end{align*}
%which contradicts the inequality $r<d$ following from the assumption that $b'$ is not divisible by $d$. 
%Thus $s\le d-r$ and 
Therefore we can move $r$ points from $Y$ above $h$ to keep the point sets $X_1, X_2, \dots, X_{d+1}$ balanced in the halfspace above $h$.

\section{Concluding remarks}

We were not able to generalize the hamburger theorem for $d\ge 3$ and $d+2$ or more measures on $\mathbb{R}^d$, even if instead of the condition that the hyperplane cuts at least $1/(d+1)$ of the total measure on each side,  we require only that the partition is nontrivial.

\begin{problem}
\label{problem_measures}
Let $d\ge 3$ and $r\ge d+2$ be integers.
Let $\mu_1, \mu_2, \dots, \mu_{r}$ be absolutely continuous positive finite Borel measures on
$\mathbb{R}^d$ that are balanced in $\mathbb{R}^d$. Does there exists a hyperplane $h$ such that for each open halfspace $H$ defined by $h$, the total measure $\sum_{j=1}^{r} \mu_j(H)$ is positive and the measures $\mu_1, \mu_2, \dots, \mu_{r}$ are balanced in $H$?
\end{problem}

It is easy to see that for a given $d$, it would be sufficient to prove Problem~\ref{problem_measures} for $r\le 2d-1$. Indeed, suppose that $r\ge 2d$ and that $\mu_1, \mu_2, \dots, \mu_{r}$ are measures balanced in $\mathbb{R}^d$. If $\mu_1(\mathbb{R}^d)\ge \mu_2(\mathbb{R}^d) \ge \dots \ge \mu_r(\mathbb{R}^d)$, then after replacing $\mu_{r-1}$ and $\mu_r$ by a single measure $\mu'_{r-1}=\mu_{r-1}+\mu_r$, the resulting set of $r-1$ measures is still balanced in $\mathbb{R}^d$.

For the case of five measures on $\mathbb{R}^3$, our approach from the proof of the hamburger theorem fails for the following reason. If $\mu_i(\mathbb{R}^3)=1/5$ for each $i\in[5]$, then the target polytope, now in $\mathbb{R}^5$, intersected with the boundary of the box $B$, does not contain a closed curve symmetric with respect to the center of $B$. If such a curve existed, we could apply a generalization of the Borsuk--Ulam theorem saying that if $f:S^k\rightarrow S^{k+l}$ and $g:S^l\rightarrow S^{k+l}$ are antipodal maps, then their images intersect~\cite[Exercise $3.^*/116$]{Mato2003}.

\paragraph{Update.}
After finishing this paper, D\"om\"ot\"or P\'alv\"olgyi gave a negative solution to Problem~\ref{problem_measures} for $d\ge 4$ by the following construction, which we include here with his permission. Take a $d$-dimensional simplex with vertices $v_1, v_2, \dots, v_{d+1}$ centered at the origin, and for each $i\in[d+1]$, let $\mu_i$ be a probabilistic measure concentrated on a small ball centered at $v_i$. Let $\mu_{d+2}$ be a probabilistic measure concentrated on $d+1$ small balls centered at the vertices $2v_1, 2v_2, \dots, 2v_{d+1}$ of a larger simplex, such that for each $i\in[d+1]$, the ball centered at $2v_i$ has measure $1/(d+1)$. 

For the case $d=3$ and $r=5$, we only have a construction showing that a constant positive fraction of the total measure in each halfspace cannot be guaranteed. Consider a simplex with vertices $v_1,v_2,v_3,v_4$, and for each $i\in[4]$, let $\mu_i$ be a measure concentrated on a small ball $B_i$ centered at $v_i$ such that $\mu_i(B_i)=1+\varepsilon$. Let $\mu_5$ be a measure concentrated on a small ball $B_5$ centered at the origin such that $\mu_5(B_5)=2-4\varepsilon$. For every hyperplane $h$ satisfying the conditions of Problem~\ref{problem_measures}, $h$ must be disjoint with $B_5$ and so at least one halfspace determined by $h$ has total measure at most $9\varepsilon$.

\section*{Acknowledgements}
We thank J\'anos Pach and the members of the DCG group at EPFL for helpful discussions. We also thank Seunghun Lee for finding an error in the previous proof of Theorem~\ref{theorem_discrete}.

\end{document}